\newtheorem{thm}{Theorem}[section]
\newtheorem{lem}[thm]{Lemma}
\newtheorem{prop}[thm]{Proposition}
\newtheorem{defn}[thm]{Definition}
\newcommand{\thmref}[1]{Theorem~\ref{#1}}
\newcommand{\lemref}[1]{Lemma~\ref{#1}}
\newcommand{\rmkref}[1]{Remark~\ref{#1}}
\newcommand{\propref}[1]{Proposition~\ref{#1}}
\theoremstyle{remark}
\newtheorem{rmk}{Remark}[section]
\newenvironment{acknowledgements}{\bigskip\textbf{Acknowledgements.}}{}
\renewcommand{\geq}{\geqslant}
\renewcommand{\leq}{\leqslant}
\newcommand{\medmatrix}[4]{\Bigl(\begin{matrix} #1 \!& \!#2 \\[-4pt] #3 \!&\! #4 \end{matrix}\Bigr)}
\begin{document}

\baselineskip=17pt

\title
{The adjoint map of the Serre derivative and special values of shifted Dirichlet series}
\author[A. Kumar]{Arvind Kumar}
\address{Harish-Chandra Research Institute, HBNI\\ Chhatnag Road,
Jhunsi, Allahabad 211019, India.}
\email{kumararvind@hri.res.in}

\date{\today}

\begin{abstract}
We compute the adjoint of the Serre derivative map with respect to the Petersson scalar product by using existing tools of nearly holomorphic modular forms. The Fourier coefficients of a cusp form of integer weight $k$, constructed using this method, involve special values of certain shifted Dirichlet series associated with a given cusp form $f$ of weight $k+2$. As application, we get an asymptotic bound for the special values of these shifted Dirichlet series and also relate these special values with the Fourier coefficients of $f$. We also give a formula for the Ramanujan tau function in terms of the special values of the shifted Dirichlet series associated to the Ramanujan delta function. 
\end{abstract}
\subjclass[2010]{Primary 11F25, 11F37; Secondary 11F30, 11F67}

\keywords{Modular forms, Nearly holomorphic modular forms, Serre derivative, Adjoint map.}

\maketitle
\section{Introduction}
For any positive integer $k\ge 4$, let $M_k(\Gamma)$ (resp. $S_k(\Gamma)$) be the space of modular forms (resp. cusp forms) of weight $k$ for a congruence subgroup $\Gamma$ of $SL_2(\mathbb{Z})$. For even $k\ge 2$, the Eisenstein series of weight $k$ is given by 
$$
E_k(z)=1-\frac{2k}{B_k}\sum_{n=1}^{\infty}\sigma_{k-1}(n)q^n,
$$
where $B_k$ is the $k^{\rm th}$ Bernoulli number, $\sigma_{k-1}(n)=\sum_{d|n}d^{k-1}$,
 $q=e^{2\pi i z}$, and $z$ is in the upper half-plane 
$\mathcal{H}$. For $k\ge 4$ and even, the Eisenstein series $E_k$ is a modular form of 
weight $k$ for $SL_2(\mathbb{Z})$. The Eisenstein series $E_2$ and the derivative $\displaystyle Df:= \frac{1}{2 \pi i}\frac{d}{dz}$ of $f\in M_k(\Gamma)$ do not satisfy the modular property. But by taking  a certain linear combination of $Df$ and $E_2f$, we get a function which transforms like a modular form of weight $k+2$. The underlying map on $M_k(\Gamma)$ is called as the Serre derivative, denoted by $\vartheta_k$ and given in section \ref{serre}. We first show that the Serre derivative is a $\mathbb{C}$-linear map from $S_k(\Gamma)$ to $S_{k+2}(\Gamma)$ which are finite dimensional Hilbert spaces. In this article our purpose is to find the adjoint of the Serre derivative map with respect to the Petersson inner product. It gives construction of a cusp form of weight $k$ with interesting Fourier coefficients, from a given cusp form of weight $k+2$.

Using the properties  of  Poincar{\'e}  series and adjoint of linear maps,  W. Kohnen in \cite{kohnen} constructed the adjoint map of the product map by a  fixed cusp form, with respect to the Petersson scalar product. After Kohnen's work, similar results in various other spaces have been obtained by many mathematicians, since the  Fourier coefficients  of the image  of  a  form  involve  special  values  of  certain shifted Dirichlet series attached to these forms, e.g., generalization to Jacobi forms (see \cite{ckk}, \cite{ajbs} and \cite{sakata}), Siegel modular forms (see \cite{lee} and \cite{ajbs2}), Hilbert modular forms (see \cite{wang}) and half-integral weight modular forms (see \cite{ajkm}). 

In this case also, the Fourier coefficients of the image of $f$ under the adjoint of the Serre derivative map, involve special values of certain shifted Dirichlet series associated with the Fourier coefficients of $f$. As an application we obtain relations among the coefficients of modular forms and special values of these shifted Dirichlet series. Moreover, we also give an asymptotic bound of the special values of this shifted Dirichlet series. Since $E_2$ and the derivative of a modular form are quasimodular forms (introduced by Kaneko and Zagier \cite{kaza}), they do not satisfy the modular transformation property. Hence, it is not possible to define the Petersson inner product in the usual way for the space of quasimodular forms. However, there is an isomorphism between the space of quasimodular forms and the space of nearly holomorphic modular forms and we can define the Petersson inner product in the space of nearly holomorphic modular forms. Therefore, sometimes it is convenient to switch our problems from quasimodular forms to nearly holomorphic modular forms and vice versa. By the means of Maass-Shimura operator $R_k$ and $E_2^*$ (see section \ref{ref} for notations), we first transform the definition of the Serre derivative in the context of nearly holomorphic modular forms and then we compute the adjoint map explicitly. We emphasize that our proof can be carried out in the setting of half-integral weight forms to construct cusp forms of half-integer weight.  

\section{Basic tools and notations}\label{ref}
Let $\mu_{\Gamma}$ denote the index of $\Gamma$ in $SL_2(\mathbb{Z})$. Unless otherwise stated we assume that $k \in \mathbb{Z}$. For $\gamma = \medmatrix a b c d \in GL_2^+(\mathbb{Q})$, we define the slash operator as follows:
  $$f|_k \gamma (z):= ({\rm det} \gamma)^{k/2} (cz+d)^{-k}f(\gamma z),\;{\rm{where}}\; \gamma z = \dfrac{az+b}{cz+d}.$$
We denote by $M_k(\Gamma, \chi)$ (resp. $S_k(\Gamma, \chi)$) the space of modular forms (resp. cusp forms) of weight $k$ for the congruence subgroup $\Gamma$ of level $N$ and Dirichlet character $\chi$ (mod $N$). We write $M_k(N)$ (resp. $S_k(N)$) for the corresponding spaces if $\Gamma = \Gamma_0(N)$ and $\chi$ is the principal character.

Let $f, g \in {M}_k(\Gamma)$ be such that the product $fg$ is a cusp form. Write $z=x+iy$, then the Petersson inner product is defined by 
\begin{equation}\label{ip3}
\langle f,g \rangle_{} := \frac{1}{\mu_{\Gamma}} \int\limits_{z \in \Gamma \setminus \mathcal{H}} 
f(z) \overline{g(z)}y^k \frac{dx dy}{y^2}.
\end{equation}
Let $k$ and $ n$ be positive integers. The $n^{\rm 
th}$ Poincar{\'e} series of weight $k$ for a congruence subgroup $\Gamma$ is defined by
\begin{eqnarray}
P_{k, n} (z):=\sum_{\gamma \in \Gamma_{\infty}\setminus \Gamma} e^{2 \pi i nz}|_k \gamma,
\end{eqnarray} 
where $ \Gamma_{\infty}:=\left\{ \pm \medmatrix 1 t 0 1 | t  \in \mathbb Z  \right\}.$\\
It is well known that $P_{k, n} \in S_k(\Gamma)$ for $k >2$ and it is characterized by the following property which is known as the Petersson coefficient formula:
\begin{lem}\label{poincare-lemma}
Let $ f \in S_k(\Gamma)$ with Fourier expansion $f(z)=\sum\limits_{m=1}^\infty a(m) q^m .$ Then 
\begin{equation*}
\langle f,~  P_{k, n} \rangle= \alpha_{k, n} a(n),
\end{equation*}
where, $\displaystyle{\alpha_{k, n}=\frac{\Gamma(k-1)} {(4\pi n)^{k-1}}.}$
\end{lem}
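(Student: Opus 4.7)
The plan is to apply the classical unfolding trick. I would begin from the definition \eqref{ip3} and substitute the defining series for $P_{k,n}$ into the slot of $g = P_{k,n}$. Because $k > 2$, the Poincaré series converges absolutely, so the (complex-conjugated) sum may be exchanged with the integral without difficulty.

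The core step is the unfolding itself. Using the identity $f|_k\gamma = f$ for $\gamma \in \Gamma$, the standard transformation rule
$$f(\gamma z)\,\overline{g(\gamma z)}\,(\mathrm{Im}\,\gamma z)^k \;=\; (f|_k\gamma)(z)\,\overline{(g|_k\gamma)(z)}\,y^k,$$
and the $SL_2(\mathbb{R})$-invariance of $dx\,dy/y^2$, the term corresponding to each coset representative $\gamma \in \Gamma_\infty \setminus \Gamma$ transforms the fundamental domain for $\Gamma\setminus\mathcal{H}$ into a $\gamma$-translate of itself. Summing over all cosets reassembles these translates into a fundamental domain for $\Gamma_\infty\setminus\mathcal{H}$, which may be chosen as the vertical strip $\{z = x+iy : 0 \le x < 1,\ y > 0\}$. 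After matching the normalization factor $1/\mu_{\Gamma}$ against the measure-theoretic passage from $\Gamma$ to $\Gamma_\infty$, I would arrive at
$$\langle f, P_{k,n} \rangle \;=\; \int_0^\infty\!\!\int_0^1 f(z)\, e^{-2\pi i n \bar z}\, y^{k-2}\, dx\,dy.$$

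From here the computation is direct. Writing $e^{-2\pi i n \bar z} = e^{-2\pi i n x}\,e^{-2\pi n y}$ and inserting the Fourier expansion $f(z) = \sum_{m\ge 1} a(m)\, e^{2\pi i m x}\, e^{-2\pi m y}$, orthogonality of exponentials on $[0,1]$ kills every term but the $m = n$ one, leaving
$$\langle f, P_{k,n}\rangle \;=\; a(n)\int_0^\infty e^{-4\pi n y}\, y^{k-2}\, dy \;=\; a(n)\cdot\frac{\Gamma(k-1)}{(4\pi n)^{k-1}} \;=\; \alpha_{k,n}\, a(n),$$
by the standard gamma integral.

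There is no serious conceptual obstacle; the only delicate point is bookkeeping. One must verify that the factor $1/\mu_{\Gamma}$ built into the inner product \eqref{ip3} is exactly absorbed when the fundamental domain for $\Gamma\setminus\mathcal{H}$ is replaced by the strip, which comes down to balancing the index $[SL_2(\mathbb{Z}):\Gamma]$ against the chosen coset representatives of $\Gamma_\infty\setminus\Gamma$ and the cusp width at $\infty$. Provided the definition of $P_{k,n}$ is consistent with the $\Gamma_\infty$ appearing in \eqref{ip3} (i.e.\ $\Gamma_\infty \subseteq \Gamma$, as the statement implicitly assumes), this matches cleanly and the formula falls out.
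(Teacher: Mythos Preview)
The paper does not supply its own proof of this lemma; it is simply quoted as the well-known Petersson coefficient formula. Your unfolding argument is the standard proof and is correct in structure: exchange sum and integral, use $\Gamma$-invariance to unfold to $\Gamma_\infty\setminus\mathcal{H}$, pick out the $m=n$ term by orthogonality, and evaluate the gamma integral.

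One caution on the bookkeeping you flag at the end. The Rankin unfolding from $\Gamma\setminus\mathcal{H}$ to $\Gamma_\infty\setminus\mathcal{H}$ does \emph{not} cancel the prefactor $1/\mu_\Gamma$ in \eqref{ip3}: with the paper's $\Gamma_\infty$ (so cusp width $1$ at $\infty$, assuming $\Gamma_\infty\subset\Gamma$) the unfolded domain is exactly the unit strip, and one obtains
\[
\langle f,P_{k,n}\rangle \;=\; \frac{1}{\mu_\Gamma}\int_0^\infty\!\!\int_0^1 f(z)\,e^{-2\pi i n\bar z}\,y^{k-2}\,dx\,dy \;=\; \frac{1}{\mu_\Gamma}\cdot\frac{\Gamma(k-1)}{(4\pi n)^{k-1}}\,a(n).
\]
There is no mechanism in the unfolding that balances $\mu_\Gamma$ against the coset count, so your phrase ``exactly absorbed'' is not accurate. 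The lemma as printed omits this $1/\mu_\Gamma$; that appears to be a normalization slip in the paper's statement (the formula is usually quoted for the un-normalized inner product) rather than something your argument should manufacture.
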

The following familiar result tells about the growth of Fourier coefficients of a modular form in which the first statement is trivial and the second is due to P. Deligne \cite{Deligne}:
\begin{prop}\label{bound}
If $f\in M_k (\Gamma,\chi)$ with Fourier coefficients $a(n),$ then 
$
a(n) \ll n^{k-1+\epsilon}, 
$\\ 
and moreover, if $f$ is a cusp form, then
$
a(n) \ll n^{\frac{k-1}{2}+\epsilon}$, for any $\epsilon >0$. 
\end{prop}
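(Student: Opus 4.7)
The plan is to handle the two statements separately, since the first is an elementary (Hecke-type) bound while the second rests on Deligne's proof of the Ramanujan--Petersson conjecture.

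For the first bound, I would decompose $f \in M_k(\Gamma,\chi)$ as $f = E + g$ with $E$ an Eisenstein component and $g \in S_k(\Gamma,\chi)$, so that the Fourier coefficient $a(n)$ splits as $a_E(n) + a_g(n)$. The coefficients of the standard Eisenstein series are explicit generalized divisor sums: up to constants and Dirichlet characters, $a_E(n)$ is controlled by $\sigma_{k-1}(n) = \sum_{d \mid n} d^{k-1}$, and the elementary bound $\sigma_{k-1}(n) \leq n^{k-1}\tau(n) \ll n^{k-1+\epsilon}$ (using $\tau(n) \ll n^{\epsilon}$) gives what is needed. For the cuspidal remainder $g$, I would apply Hecke's trivial bound: since $y^{k/2}|g(z)|$ is invariant under $|_k$ and continuous on the compact quotient $\Gamma \backslash \mathcal{H}$ (it vanishes at each cusp), it is bounded by some constant $C$. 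Then integrating over a horizontal period interval,
\[
|a_g(n)| = \Bigl| \int_0^1 g(x+iy)\, e^{-2\pi i n(x+iy)}\, dx \Bigr| \leq C\, y^{-k/2} e^{2\pi n y},
\]
and optimising at $y = 1/n$ yields $a_g(n) \ll n^{k/2}$. Since $k/2 \leq k-1$ for $k \geq 2$, this is absorbed into the Eisenstein bound, proving the first claim.

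For the second, sharper bound on cusp forms, I would not attempt an independent argument: the result is exactly the Ramanujan--Petersson conjecture for $GL_2$, which in the holomorphic setting was proven by Deligne as a consequence of the Weil conjectures. I would reduce to newforms by the Atkin--Lehner decomposition, so that after normalization the Hecke eigenvalue $\lambda(p)$ equals $a(p)/p^{(k-1)/2}$ (up to the nebentypus), and then invoke Deligne's bound $|\lambda(p)| \leq 2$ for all primes $p$. Multiplicativity of the Hecke eigenvalues then propagates this to $|a(n)| \leq \tau(n) n^{(k-1)/2}$ for the normalised newform, and $\tau(n) \ll n^{\epsilon}$ yields $a(n) \ll n^{(k-1)/2+\epsilon}$. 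For a general cusp form, expressing it as a finite linear combination of newforms (at various levels, via oldform lifts) gives the same bound with an implicit constant depending on $f$.

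The only genuine obstacle is in the second part: the bound $|\lambda(p)| \leq 2$ is deep and relies on identifying the local $L$-factor with the characteristic polynomial of Frobenius on an $\ell$-adic étale cohomology group attached to a Kuga--Sato variety, and then applying the Riemann hypothesis over finite fields. Since the proposition is stated as a citation of Deligne \cite{Deligne}, the proposal is to quote this bound rather than reprove it, and use only the elementary ingredients above (Hecke's trivial estimate, divisor bound, newform decomposition) to package it into the stated asymptotic.
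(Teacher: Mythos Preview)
Your proposal is correct and in fact considerably more detailed than what the paper does: the paper gives no proof at all, remarking only that the first bound is ``trivial'' and attributing the second to Deligne \cite{Deligne}. Your decomposition into Eisenstein plus cuspidal part with the divisor-sum and Hecke trivial bounds for the first statement, and your reduction to newforms followed by citation of Deligne's Ramanujan--Petersson bound for the second, is exactly the standard way one unpacks these two assertions, so you are fully aligned with (and have elaborated on) the paper's treatment.
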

\subsection{Nearly holomorphic modular forms}
For the convenience of the reader we repeat the relevant material from \cite{shimura} in our setting without proofs, thus making our exposition self-contained.
\begin{defn}
A nearly holomorphic modular form $f$ of weight $k$ and depth $\le p$ for  $\Gamma$ is a polynomial in $\frac{1}{\Im(z)}$ of degree $\le p$ whose coefficients are holomorphic functions on 
$\mathcal{H}$ with moderate growth such that
$$
f|_k \gamma (z)=f(z),
$$
for any $\gamma = \left(\!\!\begin{array}{ll}a&b\\ c&d\\ \end{array}\!\!\right)\in \Gamma$ and $z\in \mathcal{H}$, where $\Im(z)$ is the imaginary part of $z$.
\end{defn}
Let $\widehat{M}_k^{\le p}(\Gamma)$ denote the space of nearly holomorphic modular forms of 
weight $k$ and depth $\le p$ for $\Gamma$. We denote by $\widehat{M}_k(\Gamma)=\bigcup_p\widehat{M}_k^{\le p}(\Gamma)$ the space of all nearly holomorphic modular forms of weight $k$. Note that 
\begin{equation}
E_2^*(z)=E_2(z)-\frac{3}{\pi \Im(z)}
\end{equation} 
is a nearly holomorphic modular form of weight $2$
for the group $SL_2(\mathbb{Z})$.

\begin{defn}
Let $f\in \widehat{M}_k(\Gamma)$. Then $f$ is called a
\begin{itemize}

\item
{\bf{rapidly decreasing function}} at every cusp of $\Gamma$ if
for each $\alpha \in SL_2 (\mathbb {Q})$ and positive real number $c$, there exist positive constants $A$ and $B$ depending on $f$, $\alpha$ and $c$ such that
$$ | \Im (\alpha z)^{k/2} f(\alpha z)| < Ay^{-c}~~~ {\mbox  if}~ y = \Im(z) > B.$$
\item
{\bf{slowly increasing function}} at every cusp of $\Gamma$ if 
for each $\alpha \in SL_2 (\mathbb{Q})$ there exist positive constants $A$, $B$ and $c$ depending on $f$ and $\alpha$ such that
$$ | \Im( \alpha z )^{k/2} f (\alpha z)| < A y^c ~~~{\mbox{if}}~ y = \Im(z) > B. $$
\end{itemize}
\end{defn}

\begin{rmk}
For example, a modular form is slowly increasing and a cusp form is rapidly decreasing function. Moreover, the product of a rapidly decreasing function with any nearly holomorphic modular form provides a rapidly decreasing form.

\end{rmk}

Let $f, g \in \widehat{M}_k(\Gamma)$ be such that the product $fg$ is a rapidly decreasing function. Write $z=x+iy$, then the inner product is defined by 
\begin{equation}\label{ip}
\langle f,g \rangle_{} := \frac{1}{\mu_{\Gamma}} \int\limits_{z \in \Gamma \setminus \mathcal{H}} 
f(z) \overline{g(z)}y^k \frac{dx dy}{y^2}.
\end{equation} 
By abuse of notation, we used the same symbol here for Petersson inner product as in the case of modular forms given in \eqref{ip3}. The integral is convergent because of the hypothesis and hence the inner product is well defined.

\begin{defn}
The Maass-Shimura operator $R_k$ on $f\in \widehat{M}_k(\Gamma)$ is defined by
$$
R_k(f)=\frac{1}{2\pi i}\left(\frac{k}{2i\Im(z)}+\frac{\partial}{\partial z}\right)f(z).
$$
\end{defn}
\noindent
The operator $R_k$ takes $\widehat{M}_k(\Gamma)$ into $\widehat{M}_{k+2}(\Gamma)$, so sometimes it is called Maass raising operator. There is another operator 
$L_k:= -y^2  \frac{\partial}{\partial {\overline{z}}}: \widehat{M}_{k+2}(\Gamma) \rightarrow \widehat{M}_k(\Gamma)$, known as Maass lowering operator, which annihilates any holomorphic function. In \cite[Theorem 6.8]{shimura}, It was shown that the operator $L_k$ is the adjoint of $R_k$ with respect to the inner product \eqref{ip}, as long as the product of the functions is rapidly decreasing (see also \cite[Lemma 4.2]{brulnm}). We now state an interesting application of this observation which plays a crucial role in the proof of our main result.
\begin{lem}\label{ip0}
Let $\Gamma$ be a congruence subgroup and let $f \in S_{k+2}(\Gamma)$. Then 
$\langle f, R_{k}g \rangle = 0$ for any $g \in M_{k}(\Gamma)$.
\end{lem}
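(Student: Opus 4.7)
The plan is to invoke the fact, already recorded in the paragraph preceding the lemma, that the Maass lowering operator $L_k$ is the adjoint of the Maass raising operator $R_k$ with respect to the Petersson-type inner product \eqref{ip}, and then observe that $L_k$ annihilates holomorphic forms. Concretely, I would write
\[
\langle f, R_k g\rangle = \langle L_k f, g\rangle
\]
and note that since $f\in S_{k+2}(\Gamma)$ is in particular a holomorphic function of $z$, we have
\[
L_k f = -y^{2}\frac{\partial f}{\partial \overline{z}}=0,
\]
so the right-hand side vanishes identically. This instantly gives $\langle f, R_k g\rangle=0$.

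Before quoting the adjoint relation, one has to make sure its hypotheses are satisfied, namely that the products appearing in the two inner products define convergent integrals. For this I would argue that $f$ is a cusp form and hence rapidly decreasing at every cusp of $\Gamma$, while $R_k g\in \widehat{M}_{k+2}(\Gamma)$ is slowly increasing (being obtained by applying a Maass-Shimura operator to a modular form, which adds only polynomial factors in $1/\Im(z)$). By the remark preceding the definition of the inner product, the product of a rapidly decreasing function with any nearly holomorphic modular form is again rapidly decreasing, so $f\cdot\overline{R_k g}$ is rapidly decreasing and $\langle f, R_k g\rangle$ is well defined. The same reasoning applies to $\langle L_k f, g\rangle$ since $L_k f\equiv 0$ (or, before using holomorphy, because $L_k f$ is rapidly decreasing and $g$ is slowly increasing).

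The only step that requires any care is the verification of these growth conditions at the cusps, since the adjoint identity in \cite[Theorem 6.8]{shimura} is stated under exactly the rapid-decay hypothesis on the product. Once that is in place, the conclusion is immediate from $\overline{\partial} f = 0$. Thus the main obstacle, such as it is, is purely a matter of bookkeeping of growth rates rather than any genuine computation; no explicit manipulation of Fourier expansions or integrals is needed.
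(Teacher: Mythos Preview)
Your proposal is correct and is precisely the argument the paper intends: the lemma is stated as a direct consequence of the adjoint relation $\langle f, R_k g\rangle = \langle L_k f, g\rangle$ from \cite[Theorem~6.8]{shimura} together with the fact that $L_k$ annihilates holomorphic functions, so $L_k f = 0$ for $f\in S_{k+2}(\Gamma)$. Your check of the rapid-decay hypothesis is exactly the bookkeeping required, and the paper itself gives no further details beyond citing Shimura's result.
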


\subsection{A shifted Dirichlet series}
Let $f(z)=\sum\limits_{n=0}^\infty a(n) q^n$ and $g(z)= \sum\limits_{n=0}^\infty b(n) q^n$. For $m \geq 0$, define a shifted Dirichlet series of Rankin type as in \cite{kohnen} by
\begin{eqnarray}
L_{f,g, m}(s)= \sum\limits_{n\geq 1}\frac{a(n+m) \overline{b(n)}}{(n+m)^s}.
\end{eqnarray}
If the coefficients $a(n)$ and $b(n)$ satisfy an appropriate bound, then $L_{f,g, m}(s)$ converges absolutely in some half-plane.\\
For $f \in S_k(\Gamma, \chi)$ and a non negative integer $m$, consider a shifted Dirichlet series of Rankin type associated with $f$ and $E_2$ denoted by $L_{f,m}(s)$ and defined by
\begin{equation}
L_{f,m}(s):=-\frac{1}{24}L_{f, E_2, m}(s)=\sum\limits_{n\geq 1}\frac{a(n+m) \sigma(n)}{(n+m)^s}.
\end{equation}
Then by \propref{bound}, it is absolutely convergent for Re$(s)>\frac{k+3}{2}$. It can be shown that $L_{f, m}(s)$ has a meromorphic continuation to $\mathbb{C}$ (compare \cite[section 2-5]{hohu}). A slightly different shifted Dirichlet series of this kind associated with two modular forms was first introduced by Selberg in \cite{selberg}. Recently, in \cite{hohu},  J. Hoffstein and T. A. Hulse rigorously investigated the meromorphic continuation of a variant of Selberg's shifted Dirichlet series and multiple shifted Dirichlet series. In \cite{meon}, M. H. Mertens and K. Ono proved that certain special values of symmetrized sum of such functions involve as the coefficients of sum of mixed mock modular forms and quasimodular forms. The $p$-adic properties of these were furthermore studied in \cite{bmo}.

\section{Serre derivative}\label{serre}
It is well known \cite[Proposition~2.11]{ono} that for a positive integer $k$ and $f\in M_k(\Gamma)$ the function 
\begin{equation}\label{serredef}
\vartheta_kf := Df-\frac{k}{12}E_2f
\end{equation} 
is a modular form of weight $k+2$ for $\Gamma$. The weight $k$ operator $\vartheta_k$ defined by \eqref{serredef} is called the \textit{Serre derivative} (or sometimes \textit {the Ramanujan-Serre differential operator}). 
It is an interesting and useful operator because it defines an operator on modular forms for any congruence subgroup with character and also preserves cusp forms. 
%It is worthy to write it as a Theorem as we could not find in the literature. 
%We will follow \cite{koblitz} for the notations and theory of modular forms of half integral weight.
\begin{thm}\label{serretrans}
Let $k$ be a non negative integer. Then the weight $k$-operator $\vartheta_k$ maps $M_k(\Gamma, \chi)$ to $M_{k+2}(\Gamma, \chi)$ and $S_k(\Gamma, \chi)$ to $S_{k+2}(\Gamma, \chi)$. In particular $\vartheta_k$ maps $S_k(N)$ to $S_{k+2}(N)$.
\end{thm}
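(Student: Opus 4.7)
My plan is to exploit the nearly-holomorphic framework set up in Section~\ref{ref} instead of verifying the transformation law of $\vartheta_k$ by a direct slash computation. The key identity, which I would check first in one line straight from the definitions of $R_k$ and $E_2^*$, is
$$
\vartheta_k f \;=\; R_k f \,-\, \tfrac{k}{12}\, E_2^*\, f.
$$
The non-holomorphic contribution $-\tfrac{k}{4\pi \Im(z)} f$ coming from $R_k f$ is cancelled exactly by the $+\tfrac{k}{4\pi \Im(z)} f$ produced by $-\tfrac{k}{12}\, E_2^* f$, leaving the manifestly holomorphic expression $Df - \tfrac{k}{12} E_2 f$.

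With this identity in hand, the modular transformation law is immediate: $R_k$ sends $\widehat{M}_k(\Gamma,\chi)$ into $\widehat{M}_{k+2}(\Gamma,\chi)$, and since $E_2^* \in \widehat{M}_2(SL_2(\mathbb{Z}))$ the product $E_2^* f$ again lies in $\widehat{M}_{k+2}(\Gamma,\chi)$. Hence $\vartheta_k f \in \widehat{M}_{k+2}(\Gamma,\chi)$. To upgrade this from \emph{nearly} holomorphic to genuinely holomorphic, I would remark that the defining expression $Df - \tfrac{k}{12} E_2 f$ contains no occurrence of $1/\Im(z)$, so when $\vartheta_k f$ is written as a polynomial in $1/\Im(z)$ that polynomial has degree zero. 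Thus $\vartheta_k f$ is holomorphic on $\mathcal{H}$ and carries the correct weight $k+2$ transformation law.

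It remains to check behaviour at the cusps. At $\infty$, writing $f = \sum_{n\geq 0} a(n) q^n$, one reads off that both $Df$ and $E_2 f$ have ordinary power-series $q$-expansions, with no constant term whenever $a(0)=0$; thus $\vartheta_k f \in M_{k+2}(\Gamma,\chi)$ and cuspidality at $\infty$ is preserved. For a general cusp $\alpha(\infty)$ with $\alpha \in SL_2(\mathbb{Z})$, I would slash the identity $\vartheta_k f = R_k f - \tfrac{k}{12} E_2^* f$ by $\alpha$ and repeat the argument: the slashed function is again of the form ``holomorphic minus non-holomorphic'' with cancelling non-holomorphic parts, and the rapid decrease of $f|_k \alpha$ propagates through both $D$ and multiplication by $E_2$.

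The only real obstacle is establishing the clean identity $\vartheta_k f = R_k f - \tfrac{k}{12} E_2^* f$; the more classical alternative would be the direct slash computation $(Df)|_{k+2}\gamma = \chi(d)\bigl[Df + \tfrac{kc}{2\pi i(cz+d)} f\bigr]$, paired with the corresponding error term obtained from $E_2(\gamma z) = (cz+d)^2 E_2(z) - \tfrac{6ic(cz+d)}{\pi}$, and one must then verify that the two error terms cancel exactly in $Df - \tfrac{k}{12} E_2 f$. That error-term bookkeeping is precisely what the nearly-holomorphic reformulation absorbs in a single step.
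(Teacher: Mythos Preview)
The paper does not actually supply a proof of this theorem: it precedes the statement with the remark that the result is ``well known'' and cites \cite[Proposition~2.11]{ono}, and no \texttt{proof} environment follows the theorem. So there is nothing in the paper to compare your argument against line by line.

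That said, your approach is correct and in fact anticipates the paper's own point of view. The identity
\[
\vartheta_k f \;=\; R_k f - \tfrac{k}{12}\,E_2^*\,f
\]
that you single out as the key step is precisely the content of the Remark the paper places immediately after the theorem; the paper records it there (without proof) because it is exactly what is needed later when computing Petersson inner products. Your verification of the cancellation of the $-\tfrac{k}{4\pi\Im(z)}f$ terms is correct, and once the identity is in hand the modularity follows from the stated mapping properties of $R_k$ and the fact that $E_2^*\in\widehat{M}_2(SL_2(\mathbb{Z}))$, while holomorphy is read off from the original expression $Df-\tfrac{k}{12}E_2 f$. Your treatment of the cusps is also fine; one could shorten it slightly by noting that $\vartheta_k$ commutes with the slash operator (which is essentially what you are saying when you slash the identity by $\alpha$), so cuspidality at every cusp reduces to the $q$-expansion observation you already made at $\infty$.

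In short: your proof is sound, it is more detailed than anything in the paper, and the route you chose through nearly holomorphic forms is exactly the reformulation the paper itself highlights in the remark following the theorem.
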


%\begin{proof}
%Let $ \gamma = \medmatrix a b c d \in SL_2(\mathbb{Z})$. Using the transformation formula of $E_2$, a straightforward calculation gives
%\begin{equation}\label{Dftrans}
%(\vartheta_kf)|_{k+2} \gamma(z)= D(f|_k \gamma)(z)-\frac{k}{12}E_2(z) (f|_k \gamma)(z),
%\end{equation}
%from which the first assertion follows that $\vartheta_kf \in M_{k+2}(\Gamma, \chi)$, if $f \in M_k(\Gamma, \chi)$. From \eqref{Dftrans}, it is also clear that $\vartheta_k$ preserve the cuspidal condition by noting the action of the differential map $D$ on the $q$-expansion. 
%\end{proof}
\begin{rmk}
We observe that, the Serre derivative can also be written in the form
\begin{eqnarray}
\vartheta_k(f)= R_k f-\frac{k}{12}E_2^{*} f.
\end{eqnarray}
This form is quite useful while computing the Petersson inner product as $R_k f$ and $E_2^{*}f$ are nearly holomorphic modular forms, where the inner product is defined by \eqref{ip}, provided $f$ is a cusp form.
\end{rmk}

\begin{rmk}\label{serrehalf}
Similar to \eqref{serredef}, we can define the weight $\frac{k}{2}$-operator   $\vartheta_{k/2}$ for an odd positive integer $k$. Then one can easily see that $\vartheta_{k/2}$ maps a modular form (resp. cusp form) of weight $\frac{k}{2}$ to modular form (resp. cusp form) of weight $\frac{k}{2}+2$.
\end{rmk}
\section{Main Theorem}
From \thmref{serretrans} we know that $\vartheta_k : S_k(\Gamma) \rightarrow S_{k+2}(\Gamma)$ is a $\mathbb{C}-$linear map of finite dimensional Hilbert space and hence has an adjoint map $\vartheta_k^{*} : S_{k+2}(\Gamma) \rightarrow S_k(\Gamma)$, such that
\begin{eqnarray*}
\langle \vartheta_k^{*}  f, g \rangle = \langle  f, \vartheta_k g\rangle,~~~~ \forall ~ f \in S_{k+2}(\Gamma), ~ g \in S_k(\Gamma).
\end{eqnarray*}
In  the  main  result  we  exhibit  the  Fourier  coefficients  of $\vartheta_k^{*}  f$ for $f \in S_{k+2}(\Gamma)$. Its $m^{\rm th}$ Fourier coefficient involves special values of the shifted Dirichlet series $L_{f, m}(s)$. Now we shall state the main theorem of this article.
\begin{thm}\label{main}
Let k $\geq$ 2. The image of any function $f(z) = \sum\limits_{n \geq 1} a(n)q^n \in S_{k+2}(\Gamma)$ under $\vartheta_k^{*}$ is given by 
$$ \vartheta_k^{*}f (z)= \sum_{m \geq 1}c(m) q^m, $$
where $$c(m) = \frac{1}{\mu_{\Gamma}} \frac{k(k-1)m^{k-1}}{{(4 \pi)}^2} \left[ \frac{(m- \frac{k}{12})}{m^{k+1}}a(m) + 2kL_{f, m}(k+1) \right].$$
\end{thm}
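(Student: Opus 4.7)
The plan is to follow the template that Kohnen established in \cite{kohnen}: first extract $c(m)$ as an inner product using the Petersson coefficient formula (\lemref{poincare-lemma}), then pass to the adjoint side and unfold against the Poincar\'e series. What is special to the Serre-derivative setting is the alternative description $\vartheta_k = R_k - \tfrac{k}{12} E_2^*$, which allows \lemref{ip0} to do half of the work. Concretely, one starts from
$$c(m) = \alpha_{k,m}^{-1}\langle \vartheta_k^{*}f, P_{k,m}\rangle = \alpha_{k,m}^{-1}\langle f, \vartheta_k P_{k,m}\rangle = \alpha_{k,m}^{-1}\bigl(\langle f, R_k P_{k,m}\rangle - \tfrac{k}{12}\langle f, E_2^* P_{k,m}\rangle\bigr),$$
and since $P_{k,m}\in S_k(\Gamma)\subset M_k(\Gamma)$, \lemref{ip0} annihilates the first summand. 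So the whole problem reduces to evaluating $\langle f, E_2^* P_{k,m}\rangle$.

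For that evaluation I would apply the standard Poincar\'e-series unfolding, but the integrand must first be rearranged so that the weight-$k$ Poincar\'e series is paired with a weight-$k$ object. A quick check of transformation laws (using $E_2^*(\gamma z) = (cz+d)^2 E_2^*(z)$ together with $\Im(\gamma z) = y/|cz+d|^2$) shows that $\Phi(z) := f(z)\,\overline{E_2^*(z)}\, y^2$ transforms as a weight-$k$ form for $\Gamma$: the factor $(cz+d)^{k+2}$ from $f$ combines with $(c\bar z+d)^2$ from $\overline{E_2^*}$ and $|cz+d|^{-4}$ from $y^2$ to leave precisely $(cz+d)^k$. Rewriting the inner product as $\mu_\Gamma^{-1}\int_{\Gamma\backslash\mathcal{H}}\Phi\,\overline{P_{k,m}}\, y^k\, dx\,dy/y^2$, the cuspidality of $f$ and the slow growth of $E_2^*$ guarantee absolute convergence, so the usual unfolding collapses this to a strip integral over $\Gamma_\infty\backslash\mathcal{H}$.

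The last step is to substitute $f(z) = \sum_{l\geq 1} a(l)q^l$ and $E_2^*(z) = 1 - 24\sum_{n\geq 1}\sigma(n)q^n - \tfrac{3}{\pi y}$, integrate in $x$ (which collapses all terms to $l = n+m$), and evaluate the $y$-integrals via $\int_0^\infty y^{s-1}e^{-cy}\, dy = \Gamma(s)/c^s$. The constant term $1$ of $E_2^*$ produces a $\Gamma(k+1)/(4\pi m)^{k+1}$ multiple of $a(m)$, the non-holomorphic $-3/(\pi y)$ produces a $-3\Gamma(k)/(\pi(4\pi m)^k)$ multiple of $a(m)$, and these two merge through the identity $k\Gamma(k) - 12m\Gamma(k) = -12(m-\tfrac{k}{12})\Gamma(k)$ into the first bracketed term of the theorem, while the $-24\sigma(n)$ tail yields $2kL_{f,m}(k+1)$ after a parallel $\Gamma(k+1) = k\Gamma(k)$ manipulation. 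Finally, dividing by $\alpha_{k,m} = \Gamma(k-1)/(4\pi m)^{k-1}$ and using $\Gamma(k)/\Gamma(k-1) = k-1$ extracts the prefactor $k(k-1)m^{k-1}/(\mu_\Gamma(4\pi)^2)$. The principal difficulty is bookkeeping: the non-holomorphic correction of $E_2^*$ must cancel against the constant-term contribution in a rather precise way to produce the $(m-k/12)$ inside the bracket, and several $\Gamma$-factor and $(4\pi m)$-factor reductions have to be executed in the right order for the clean final formula to emerge.
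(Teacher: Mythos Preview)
Your proposal is correct and follows essentially the same route as the paper: Petersson coefficient formula, the rewriting $\vartheta_k = R_k - \tfrac{k}{12}E_2^*$ together with \lemref{ip0} to kill the $R_k$-term, then Rankin unfolding of $\langle f, E_2^* P_{k,m}\rangle$ and the gamma-integral bookkeeping you describe. The only cosmetic difference is that the paper isolates the interchange of sum and integral as a separate lemma (\lemref{interchange}), whereas you absorb this into the phrase ``cuspidality of $f$ and slow growth of $E_2^*$ guarantee absolute convergence''; the content is the same.
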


\section{Applications}
\subsection{An asymptotic bound for \texorpdfstring{$L_{f, m}(k+1)$}{Lg}}
Let $f \in S_{k+2}(\Gamma)$ and $\Gamma$ be a congruence subgroup of level $N$.
From \thmref{main}, we can write
\begin{equation*}
L_{f, m}(k+1)= \frac{1}{2k} \left[  \frac{ \mu_{\Gamma} {(4 \pi)}^2}{k(k-1)} \frac{1}{m^{k-1}}c(m)  -\frac{(m- \frac{k}{12})}{m^{k+1}}a(m) \right].
\end{equation*}
Here, $c(m)$ is the $m^{\rm th}$ Fourier coefficient of $\vartheta_k^*f$ which is a cusp form of weight $k$. Hence, in view of \propref{bound}, a direct calculation gives
\begin{equation}
L_{f, m}(k+1)\ll m^{\frac{1-k}{2}},
\end{equation}
where the implied constant depends on $f$.
\subsection{Values of \texorpdfstring{$L_{f, m}(k+1)$}{Lg} in terms of the Fourier coefficients}
Let $k \geq 2$ and $\Gamma$ be a congruence subgroup for which $S_k(\Gamma)$ is a 
one-dimensional space; we denote a generator of $S_k(\Gamma)$ by $f(z)$. Then applying \thmref{main}, we get $\vartheta_k^{*}g(z)= \alpha_g f(z)$ for any $g \in S_{k+2}(\Gamma)$, where $\alpha_g$ is a constant. Now equating the $m^{\rm th}$ Fourier coefficients both the sides,  we get a relation among the special values of the shifted Dirichlet series associated with $g$ and the Fourier coefficients of $f$. In the following, we illustrate this with one example.

From now on, $\Delta_{k, N}$ will denote the unique normalized cusp form with Fourier coefficients $\tau_{k ,N}(n)$ in the one dimensional space $S_k(N)$. 
Note that $\Delta_{12, 1}(z)= \Delta(z)$, whose Fourier coefficients $\tau (n)$, the Ramanujan tau function. For a positive integer $t$, we introduce the $V$-operator acting on a function $f$ (defined on $\mathbb{C}$) by 
\begin{equation*}\label{Vop}
V_tf(z):= f(tz).
\end{equation*}
It is known that $V_t$ is a linear operator from $S_k(N)$ into $S_k(Nt)$. Note that 
$S_{10}(2)=\mathbb{C}\Delta_{10, 2}(z)$ and $S_{12}(2)=\mathbb{C}\Delta (z) \oplus \mathbb{C} V_2\Delta(z)$. Now
considering the map $\vartheta_{10}: S_{10}(2) \rightarrow S_{12}(2)$, a direct computation shows that
\begin{equation}\label{exp10}
\vartheta_{10} \Delta_{10, 2}(z) 
= \frac{1}{6} \Delta (z)+ \frac{128}{3} V_2\Delta(z)
\end{equation} 
Let 
$ \vartheta_{10}^{*} \Delta(z) = \alpha \Delta_{10, 2}(z)$  and $\vartheta_{10}^{*} V_2\Delta(z) = \beta \Delta_{10,2}(z)$,  for some $ \alpha, \beta \in \mathbb{C}.$
By using the property of the adjoint map and \eqref{exp10}, we have
\begin{align}\label{alpha}
\alpha \Vert\Delta_{10, 2}\Vert^2
&= \langle \alpha \Delta_{10, 2} , \Delta_{10, 2} \rangle
= \langle \vartheta_{10}^{*} \Delta, \Delta_{10, 2} \rangle \notag \\
&= \langle  \Delta, \vartheta_{10} \Delta_{10, 2} \rangle 
= \langle  \Delta, \frac{1}{6} \Delta+ \frac{128}{3} V_2\Delta \rangle \notag\\
 &=\frac{1}{6} \Vert\Delta\Vert^2+ \frac{128}{3}\langle \Delta, V_2\Delta\rangle.  
\end{align}
\noindent
Similarly,  
\begin{equation}\label{beta}
\beta \Vert\Delta_{10, 2}\Vert^2 =\frac{128}{3} \Vert V_2\Delta\Vert^2+ \frac{1}{6}\langle \Delta, V_2 \Delta \rangle.~~~~~~~~~~~~~
\end{equation}
From {\cite[Eq. 49]{chi}}, we know that
$ \langle \Delta, V_2\Delta\rangle = -\frac{1}{256}\Vert\Delta\Vert^2$.
Using it in expression \eqref{alpha}, we get $\alpha = 0$, which gives
\begin{equation}\label{adj10}
\vartheta_{10}^{*} \Delta(z) = 0.
\end{equation}
Now applying \thmref{main}, we have
\begin{eqnarray*}
\frac{(m- \frac{10}{12})}{m^{11}}\tau(m) + 20L_{\Delta, m}(11) =0
\end{eqnarray*}

\begin{equation}\label{tau}
\tau(m) = \frac{-20m^{11}}{(m- \frac{5}{6})} L_{\Delta, m}(11).
\end{equation}
From {\cite[Proposition 46]{koblitz}}, we get
%$\Vert \Delta|_{12} \medmatrix 2 0 0 1 \Vert^2=\Vert \Delta \Vert^2$.
%Since $\Delta|_{12}\medmatrix 2 0 0 1(z) \linebreak= 2^6 V_2\Delta(z)$, it follows that
$$ \Vert V_2\Delta \Vert^2 = 2^{-12} \Vert \Delta \Vert^2$$
and \eqref{beta} gives
\begin{equation}
\beta =\frac{5}{2^9} \frac{\Vert\Delta\Vert^2}{\Vert\Delta_{10, 2}\Vert^2}.
\end{equation}
Using \thmref{main} in the expression 
$\Delta_{10,2}(z)=\frac{1}{\beta}\vartheta_{10}^* V_2\Delta(z)$, we get 
\begin{equation}\label{tau2}
\tau_{10,2}(m)=\frac{15 m^9}{8 \beta \pi^2 }\left[ \frac{(m- \frac{10}{12})}{m^{11}} \tau\left(\frac{m}{2}\right) + 20L_{V_2\Delta, m}(11) \right],
\end{equation}
where $\tau (n) =0$ if $n$ is not an integer.\\
Therefore, for odd $m$, we have
\begin{align*}
\tau_{10,2}(m)&=\frac{3840 m^9}{\pi^2 }  \frac{\Vert\Delta_{10, 2}\Vert^2} {\Vert\Delta\Vert^2}  L_{V_2\Delta, m}(11),
\end{align*}
where $\displaystyle{L_{V_2\Delta, m}(11)=\sum_{\substack{n \geq 1 \\ n: {\rm odd}}}}
\frac{\tau(\frac{m+n}{2})\sigma(n)}{(m+n)^{11}}$.
\begin{rmk}\label{taufn}
From \eqref{tau}, we see that for any $m\geq 1$ there exist $n \geq 1$ such that $\tau (m)$ and $\tau(m+n)$ are of opposite sign. In other words, it follows that Ramanujan tau function $\tau(m)$ and $L_{\Delta, m}(11)$ both exhibit infinitely many sign changes. We can also find the values of $L_{\Delta, m}(11)$ for each $m\geq 1$, in particular $L_{\Delta, 1}(11)=-\frac{1}{120}$. Moreover, Lehmer's conjecture is equivalent to non-vanishing of $L_{\Delta, m}(11)$. From \eqref{tau}, we also observe that for each $m \geq 1, L_{\Delta,m}(11) \in \mathbb{Q}$, because the coefficient field of $\Delta$ is $\mathbb{Q}$.
\end{rmk}
%\begin{rmk}
%$The identity \eqref{adj10} also immediately follows from the fact that 
%$\vartheta_{10}^*\Delta$ is a weight $10$ cusp form on $SL_2(\mathbb{Z})$ and the space of such forms is trivial.
%\end{rmk}
In general for any $f \in S_{k+2}(\Gamma)$ and $m \geq 1$, using similar method, we can write $L_{f,m}(k+1)$ as a linear combination of $m^{\rm th}$ Fourier coefficients of $f$ and elements from a fixed basis of $S_k(\Gamma)$. Then analogous observations can be made as in \rmkref{taufn}.

\section{Proof of the main Theorem}
We need the following Lemma to prove the main theorem.
\begin{lem}\label{interchange}
Using the same notation as in \thmref{main}, the following sum of integrals
\begin{equation*}
\sum_{\gamma\in\Gamma_{\infty}\setminus \Gamma} 
 \int_{\Gamma\setminus\mathcal{H}}  
\mid f(z) \overline{E_2^{*}(z)}~\overline{e^{2\pi imz}|_k\gamma}~y^{k+2}\mid ~\frac{dx dy}{y^2}
\end{equation*}
converges. 
\end{lem}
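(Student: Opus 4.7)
The plan is to apply the standard unfolding trick: rearrange the sum of integrals into a single integral over the strip $\Gamma_\infty\setminus\mathcal{H}$, and then estimate that integral using the rapid decay of the cusp form $f$ together with the mild growth of $E_2^*$. Since the integrand is non-negative, Tonelli's theorem permits all interchanges of sum and integral without extra justification.

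First, I would derive the key pointwise identity. For $\gamma=\medmatrix a b c d\in\Gamma$, combining the transformation rules
\[
|f(\gamma z)|=|cz+d|^{k+2}|f(z)|,\qquad |E_2^*(\gamma z)|=|cz+d|^{2}|E_2^*(z)|,\qquad y=|cz+d|^2\Im(\gamma z),
\]
with the standard computation $|e^{2\pi imz}|_k\gamma|=|cz+d|^{-k}e^{-2\pi m\Im(\gamma z)}$, the powers of $|cz+d|$ cancel exactly, producing
\[
|f(z)\,\overline{E_2^*(z)}\,\overline{e^{2\pi imz}|_k\gamma}|\,y^{k+2}
=|f(\gamma z)E_2^*(\gamma z)|\,\Im(\gamma z)^{k+2}\,e^{-2\pi m\Im(\gamma z)}.
\]
Note that $E_2^*$ transforms as required because it is a nearly holomorphic modular form of weight $2$ for $SL_2(\mathbb{Z})\supseteq\Gamma$.

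Second, I would substitute $w=\gamma z$ in the $\gamma$-th integral. Since $dx\,dy/y^2$ is $SL_2(\mathbb{R})$-invariant, the $\gamma$-th term becomes
\[
\int_{\gamma\cdot(\Gamma\setminus\mathcal{H})} |f(w)E_2^*(w)|\,\Im(w)^{k}\,e^{-2\pi m\Im(w)}\,du\,dv,
\]
with $w=u+iv$. As $\gamma$ ranges over a set of coset representatives for $\Gamma_\infty\setminus\Gamma$, the translates $\gamma\cdot(\Gamma\setminus\mathcal{H})$ tile $\Gamma_\infty\setminus\mathcal{H}$ up to measure zero, so by Tonelli the full sum collapses to the single strip integral
\[
\int_{\Gamma_\infty\setminus\mathcal{H}} |f(w)E_2^*(w)|\,\Im(w)^{k}\,e^{-2\pi m\Im(w)}\,du\,dv.
\]

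Third, realizing $\Gamma_\infty\setminus\mathcal{H}$ as $\{u+iv:0\le u\le h,\ v>0\}$ for the appropriate cusp width $h$, I need to bound this integral. Because $f$ is a cusp form of weight $k+2$ and $E_2^*$ is a slowly increasing nearly holomorphic form of weight $2$, the product $fE_2^*$ is a rapidly decreasing nearly holomorphic form of weight $k+4$ for $\Gamma$, so $|f(z)E_2^*(z)|\,v^{(k+4)/2}$ is bounded on $\mathcal{H}$. This gives $|f(z)E_2^*(z)|\ll v^{-(k+4)/2}$, and the strip integral is dominated by a convergent gamma-type integral
\[
h\int_0^\infty v^{(k-4)/2}\,e^{-2\pi m v}\,dv.
\]

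The exponential factor ensures convergence at $v\to\infty$, and the hardest point is integrability at $v\to 0$. The naive pointwise estimate suffices cleanly for $k\geq 3$; in the endpoint case $k=2$ one has to exploit the full rapid decay of $fE_2^*$ at every cusp of $\Gamma$ (not only the uniform modular bound), which provides additional decay in $u$ near rational points and kills the residual logarithmic singularity.
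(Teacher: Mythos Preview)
Your argument is essentially the paper's: unfold via Rankin's method to a single integral over $\Gamma_\infty\setminus\mathcal{H}$, then dominate using the uniform bound $|f(z)E_2^*(z)|\,y^{w/2}\le M$ that follows from $fE_2^*$ being rapidly decreasing. You are in fact more careful than the paper about the weight---since $f\in S_{k+2}(\Gamma)$ the product $fE_2^*$ has weight $k+4$, whereas the paper's proof contains a slip (it writes ``$f$ is a cusp form of weight $k$'' and takes the weight of $fE_2^*$ to be $k+2$); with the correct exponent your gamma integral $\int_0^\infty v^{(k-4)/2}e^{-2\pi mv}\,dv$ indeed only converges for $k>2$, and your flag on the endpoint $k=2$ is appropriate but ultimately moot, since the Poincar\'e series $P_{k,m}$ invoked in the proof of the main theorem already requires $k>2$.
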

\begin{proof}
Since $f$ is a cusp form of weight $k$, the function $fE_2^{*}$ is a nearly holomorphic modular form of weight $k+2$ and is rapidly decreasing at every cusp. Therefore, for some positive constant M, we have
\begin{center}
$
\mid y^{\frac{k}{2}+1} f(z) {E_2^{*}(z)}\mid ~ \leq M, ~~~\forall ~z\in \mathcal{H}.
$
\end{center}
Changing the variable $z$ to $\gamma^{-1} z$ and using the standard Rankin unfolding argument, the sum in \lemref{interchange} equals to
\begin{align*}
\int_{\Gamma_{\infty}\setminus\mathcal{H}}  
\mid f(z) \overline{E_2^{*}(z)}~\overline{e^{2\pi imz}}~y^{k+2}\mid ~\frac{dx dy}{y^2} 
&= \int_{\Gamma_{\infty}\setminus\mathcal{H}}  
\mid y^{\frac{k}{2}+1} f(z) {E_2^{*}(z)}\mid {e^{- 2\pi my}}~y^{\frac{k}{2}+1} ~\frac{dx dy}{y^2}\\
& \leq M \int_0^{\infty} \int_0^1 e^{- 2\pi my}~y^{\frac{k}{2}-1} ~dx dy = M \frac{\Gamma (\frac{k}{2})}{(2 \pi m)^{\frac{k}{2}}}. 
\end{align*}
\end{proof}
\noindent
\textit{Proof of \thmref{main}.} Since $\vartheta_k^{*} f= \sum\limits_{m \geq 1}c(m) q^m$, by \lemref{poincare-lemma}, we get
\begin{align*}
c(m) &= \frac{(4\pi m)^{k-1}}{\Gamma(k-1)}  \langle \vartheta_k^{*}f, P_{k,m}\rangle 
= \frac{(4\pi m)^{k-1}}{\Gamma(k-1)}  \langle f, \vartheta_k P_{k,m}\rangle.
\end{align*}
By considering the above inner product in the space of nearly holomorphic modular forms and using \lemref{ip0}, we get
\begin{align*}
  \langle f, \vartheta_k P_{k,m}\rangle  & = \langle f, R_k P_{k,m} -\frac{k}{12} E_2^{*} P_{k, m} \rangle \notag = \langle f, R_k P_{k,m} \rangle -\frac{k}{12} \langle f, E_2^{*} P_{k, m} \rangle \\
&= -\frac{k}{12} \langle f, E_2^{*} P_{k, m} \rangle. 
\end{align*}
Hence,
\begin{equation}\label{c(m)}
c(m)= -\frac{k}{12}\frac{(4\pi m)^{k-1}}{\Gamma(k-1)} \langle f, E_2^{*} P_{k, m} \rangle.
\end{equation}
Now consider,
\begin{align*}
\langle f, E_2^{*} P_{k, m} \rangle &= 
\frac{1}{\mu_{\Gamma}} \int_{\Gamma\setminus\mathcal{H}}  
f(z) \overline{E_2^{*}(z) P_{k,m}}~y^{k+2} ~\frac{dx dy}{y^2}\\
&= \frac{1}{\mu_{\Gamma}} \int_{\Gamma\setminus\mathcal{H}}  
 f(z) \overline{E_2^{*}(z)}~\overline{\sum_{\gamma\in\Gamma_{\infty}\setminus \Gamma}e^{2\pi imz}\mid_k\gamma}~y^{k+2} ~\frac{dx dy}{y^2}.
\end{align*}
By \lemref{interchange}, we can interchange summation and integration in the above expression. Using Rankin's unfolding argument, the integral in the above expression can be written as
\begin{align}\label{suminterchange}
 &
\int_{\Gamma_{\infty}\setminus\mathcal{H}} f(z) \overline{E_2^{*}(z)}~\overline{e^{2\pi imz}}~y^{k} ~dx dy \notag \\
&= \int_0^{\infty} \int_0^1 \sum_{s \geq 1} a(s) e^{2 \pi i s (x+iy)} \overline{\left( 1- \frac{3}{\pi y}-24 \sum_{t \geq 1} \sigma(t) e^{2 \pi i t (x+iy)} \right)} \overline{e^{2 \pi i m (x+iy)}} y^k dx dy \notag \\
&= \sum_{s \geq 1} a(s) \int_0^{\infty} \int_0^1  {\left( 1- \frac{3}{\pi y} \right)}e^{-2 \pi y (s+m)}y^k e^{2 \pi i x (s-m)}  dx dy \notag \\
&\hspace{50pt} -24 \sum_{t \geq 1} \sigma(t) \sum_{s \geq 1}  a(s) \int_0^{\infty} \int_0^1  
e^{-2 \pi y(t+m-s)}y^k  e^{2 \pi i x (s-t-m)} dx dy \\
&=  
a(m) \int_0^{\infty} {\left( 1- \frac{3}{\pi y} \right)}e^{-4 \pi m y }y^k dy -24~ \sum_{t \geq 1} a(t+m) \sigma(t) \int_0^{\infty} 
e^{-4 \pi y(t+m)}y^k dy \notag \\
&= \frac{\Gamma(k)}{\pi (4 \pi m)^k}\left(\frac{k}{4m}-3\right) a(m)- 24 \frac{\Gamma(k+1)}{(4 \pi)^{k+1}}
\sum_{t \geq 1} \frac{a(t+m) \sigma(t)}{(t+m)^{k+1}}\notag \\
&= \frac{\Gamma(k)}{(4 \pi)^{k+1}} \left[ \frac{(k-12m)}{m^{k+1}}a(m)-24kL_{f, m}(k+1)\right]. \notag
\end{align}
By \propref{bound}, interchanging the sum and integral in \eqref{suminterchange} is justified. 
Hence $$\langle f, E_2^{*} P_{k, m} \rangle = 
\frac{1}{\mu_{\Gamma}} \frac{\Gamma(k)}{(4 \pi)^{k+1}} \left[ \frac{(k-12m)}{m^{k+1}}a(m)-24kL_{f, m}(k+1)\right]. $$
This proves the theorem.
\begin{rmk}
It is worth pointing out that \lemref{ip0} holds good for forms of half-integral weight. So in view of \rmkref{serrehalf} and using the same technique as in the proof of \thmref{main}, one can explicitly find the map
\begin{eqnarray*}
\vartheta_{k/2}^* : S_{\frac{k}{2}+2}(\Gamma) \longrightarrow S_{\frac{k}{2}}(\Gamma),
\end{eqnarray*}
where $k$ is an odd positive integer and $\Gamma= \Gamma_0(N), N \in 4\mathbb{N}$. It gives a construction of cusp forms of half-integral weight whose coefficients involve special values of shifted Dirichlet series of Rankin type. 
\end{rmk}

\begin{acknowledgements}
I would like to thank Dr. B. Sahu for raising this question and for some useful discussions. I am grateful to my supervisor Prof. B. Ramakrishnan for his constant support, comments and suggestions about the paper. The work has been supported by the SPM research grant of the Council of Scientific and Industrial Research (CSIR), India.
\end{acknowledgements} 
\bigskip

\end{document}